\definecolor{verylight}{gray}{0.97}
\definecolor{light}{gray}{0.9}
\definecolor{medium}{gray}{0.85}
\definecolor{dark}{gray}{0.6}
\def\NZQ{\Bbb}               % the font for N,Z,Q,R,C
\def\NN{{\NZQ N}}
\def\ZZ{{\NZQ Z}}
\def\frk{\frak}               % font for "Fraktur"
\def\Phi{{\frk n}}
\def\Phi{{\frk N}}
\def\MI{{\mathcal I}}
\def\MP{{\mathcal P}}
\def\MQ{{\mathcal Q}}
\def\MH{{\mathcal H}}
\def\MS{{\mathcal S}}
\def\ML{{\mathcal L}}
\def\MC{{\mathcal C}}
\def\MM{{\mathcal M}}
\def\MB{{\mathcal B}}
\def\opn#1#2{\def#1{\operatorname{#2}}} % to make operators
\opn\chara{char} \opn\length{\ell} \opn\pd{pd} \opn\rk{rk}
\opn\projdim{proj\,dim} \opn\injdim{inj\,dim} \opn\rank{rank}
\opn\depth{depth} \opn\grade{grade} \opn\height{height}
\opn\embdim{emb\,dim} \opn\codim{codim}
\opn\Tr{Tr} \opn\bigrank{big\,rank}
\opn\superheight{superheight}\opn\lcm{lcm}
\opn\trdeg{tr\,deg}%\emph{
\opn\reg{reg} \opn\lreg{lreg} \opn\ini{in} \opn\lpd{lpd}
\opn\size{size}\opn\bigsize{bigsize}
\opn\cosize{cosize}\opn\bigcosize{bigcosize}
\opn\sdepth{sdepth}\opn\sreg{sreg}
\opn\link{link}\opn\fdepth{fdepth}
\opn\div{div} \opn\Div{Div} \opn\cl{cl} \opn\Cl{Cl}
\opn\Spec{Spec} \opn\Supp{Supp} \opn\supp{supp} \opn\Sing{Sing}
\opn\Ass{Ass} \opn\Min{Min}\opn\Mon{Mon} \opn\dstab{dstab} \opn\astab{astab}
\opn\Syz{Syz}
\opn\Ann{Ann} \opn\Rad{Rad} \opn\Soc{Soc}
\opn\Im{Im} \opn\Ker{Ker} \opn\Coker{Coker} \opn\Am{Am}
\opn\Hom{Hom} \opn\Tor{Tor} \opn\Ext{Ext} \opn\End{End}
\opn\Aut{Aut} \opn\id{id}
\opn\nat{nat}
\opn\pff{pf}%   \pf exists already
\opn\Pf{Pf} \opn\GL{GL} \opn\SL{SL} \opn\mod{mod} \opn\ord{ord}
\opn\Gin{Gin} \opn\Hilb{Hilb}\opn\sort{sort}
\opn\initial{init}
\opn\ende{end}
\opn\height{height}
\opn\aff{aff} \opn\con{conv} \opn\relint{relint} \opn\st{st}
\opn\lk{lk} \opn\cn{cn} \opn\core{core} \opn\vol{vol}
\opn\link{link} \opn\star{star}\opn\lex{lex}
\opn\sign{sign}
\opn\gr{gr}
\def\pot#1#2{#1[\kern-0.28ex[#2]\kern-0.28ex]}
\opn\dirlim{\underrightarrow{\lim}}
\opn\inivlim{\underleftarrow{\lim}}
\let\sect=\cap
\let\Union=\bigcup
\let\to=\rightarrow
\def\Implies{\ifmmode\Longrightarrow \else
        \unskip${}\Longrightarrow{}$\ignorespaces\fi}
\def\implies{\ifmmode\Rightarrow \else
        \unskip${}\Rightarrow{}$\ignorespaces\fi}
\def\iff{\ifmmode\Longleftrightarrow \else
        \unskip${}\Longleftrightarrow{}$\ignorespaces\fi}
\newtheorem{Theorem}{Theorem}[section]
 \newtheorem{Lemma}[Theorem]{Lemma}
 \newtheorem{Corollary}[Theorem]{Corollary}
 \newtheorem{Proposition}[Theorem]{Proposition}
\let\epsilon\varepsilon
\let\kappa=\varkappa
\def\qed{\ifhmode\textqed\fi
      \ifmmode\ifinner\quad\qedsymbol\else\dispqed\fi\fi}
\def\textqed{\unskip\nobreak\penalty50
       \hskip2em\hbox{}\nobreak\hfil\qedsymbol
       \parfillskip=0pt \finalhyphendemerits=0}
\def\dispqed{\rlap{\qquad\qedsymbol}}
\opn\dis{dis}
\def\pnt{{\raise0.5mm\hbox{\large\bf.}}}
\opn\Lex{Lex}
\begin{document}
 \title{The coordinate ring of a simple polyomino}

 \author {J\"urgen Herzog and Sara Saeedi Madani}

\address{J\"urgen Herzog, Fachbereich Mathematik, Universit\"at Duisburg-Essen, Campus Essen, 45117
Essen, Germany} \email{juergen.herzog@uni-essen.de}

\address{Sara Saeedi Madani, School of Mathematics,
Institute for Research in Fundamental Sciences (IPM), P.O. Box 19395-5746, Tehran, Iran} \email{sarasaeedim@gmail.com}

\thanks{}
\thanks{The paper was written while the second author was visiting the Department of Mathematics of University Duisburg-Essen. She wants to express her thanks for its hospitality.}

\begin{abstract}
In this paper it is shown that a polyomino is balanced if and only if it is simple. As a consequence one obtains that the coordinate ring of a
simple polyomino is a normal Cohen--Macaulay domain.
\end{abstract}

\thanks{}

\subjclass[2010]{05B50, 05E40, 13G05.}
\keywords{Polyomino, simple, balanced, rectilinear polygon, normal Cohen--Macaulay domain.}

 \maketitle

\section*{Introduction}
\label{introduction}
The study of the algebraic properties of ideals of $t$-minors  of an $(m\times n)$-matrix of indeterminates is a classical subject of research  in Commutative Algebra. The basic reference on this subject is \cite{BV}. Gr\"obner bases of determinantal ideals are treated in  \cite{HT} and ladder determinantal ideals are considered in  \cite{C}. In these  articles the reader finds further  references to other aspects of determinantal ideals.  Hochster and Eagon \cite{HE} showed that determinantal ideals define normal Cohen--Macaulay domains.  There are various generalizations of this result which include a similar statement as that of Hochster and Eagon for ideals of minors of ladders. Ladders may be viewed as special classes of  polyominoes, which roughly speaking are figures obtained by joining squares of equal size edge to edge. The squares which establish a polyomino are called its cells. The precise definitions are given in Section~\ref{preliminaries}. Polyminoes which originally were considered  in recreational mathematics have been and still are subject of intense research in connection with tiling problems of the plane, see for example \cite{Go} and \cite{GS}.

Let $\MP$ be a polyomino. We fix a field $K$ and consider  in a suitable polynomial ring $S$ over $K$ the ideal of all $t$-minors belonging to $\MP$. It is natural to ask for which shape of the polyomino this ideal of $t$-minors defines a Cohen--Macaulay domain as it is the case for a matrix or a ladder.
Here we restrict our attention to the ideal of all $2$-minors of a polyomino. The $2$-minors belonging to a polymino $\MP$, are called the inner minors, and the ideal $I_\MP$ they generate is called the ideal of inner minors of $\MP$ or the  polyomino ideal attached to $\MP$. The residue class ring $K[\MP]$ defined by  the polyomino ideal is called the coordinate ring of $\MP$.

Polyomino ideals attached to polyominoes have been introduced by Qureshi in \cite{Q} where,  among other results, she showed that the coordinate ring of a convex polyomino is a normal Cohen--Macaulay domain and where for stack polyominoes she computed the divisor class group and determined those stack polyminoes which are Gorenstein. A classification of convex polyominoes whose polyomino ideal is linearly related is given in \cite{EHH}. In a subsequent paper  \cite{HQSh} of Qureshi with Shikama and the first author of this paper,  balanced polyominoes were introduced.  To define a balanced polyomino, one labels the  vertices of a polyomino by integer numbers in a way that row and column sums are zero along intervals that belong to the polyomino. Such a labeling is called admissible. To each admissible labeling $\alpha$, a binomial $f_\alpha$ is naturally associated. The ideal $J_\MP$ generated  by the $f_\alpha$ generates the lattice ideal of a certain saturated lattice $\Lambda\subset \ZZ^q$ for a suitable $q$. Balanced polyominoes are exactly those for which $I_\MP=J_\MP$. Since the lattice ideal of a saturated lattice is always a prime ideal it follows that $K[\MP]$ is a domain if $\MP$ is balanced. Actually in \cite{HQSh} it is even shown that  $K[\MP]$ is a normal Cohen--Macaulay domain if $\MP$ is balanced.

In \cite{HQSh} it is conjectured that a polyomino is balanced if and only if it is simple. A polyomino is called simple if it is hole-free. The main result of this paper is Theorem~\ref{main} in which we prove the above conjecture. As a consequence we obtain that the coordinate ring of a simple polyomino is a normal Cohen--Macaulay domain. This result covers the case of row or column convex polyominoes as well as of tree--like polyominoes which are treated in \cite{HQSh}. We also would like to mention that there are some examples of polyominoes with holes whose coordinate rings nevertheless are not domains. Thus it remains an open problem to classify all polyominoes whose coordinate rings are domains.

The proof of our main result requires some combinatorial geometric arguments which by a lack of suitable references we included to this paper. The first fact needed is that the border of a simple polyomino is a simple rectilinear polygon, in other words, a polygon which does not self-intersect and whose edges intersect orthogonally.   This fact allows us to define an admissible border labeling which is crucial in the proof of the main theorem. We call a corner $c$ of a  rectilinear polygon ``good" if the rectangle spanned by $c$ and its neighbor corners belongs to the interior of the polygon. The other fact needed in the proof is that any rectilinear polygon has at least four good corners. In Computational Geometry, the rectilinear polygons are studied in connection to the so called art gallery problem. They are also used in computer aided manufacturing processes.

\section{Preliminaries on polyominoes, rectilinear polygons and related algebraic concepts}
\label{preliminaries}

In this section we introduce simple and balanced polyominoes and present some of their properties and related facts which are needed in the next section.

Let $\mathbb{R}_{+}^2=\{(x,y)\in \mathbb{R}^2:x,y\geq 0\}$. We consider $(\mathbb{R}_{+}^2,\leq )$ as a partially ordered set with $(x,y)\leq (z,w)$ if $x\leq z$ and $y\leq w$. Let $a,b\in \mathbb{N}^2$ (by $\NN$, we mean the set of all nonnegative integers). Then the set $[a,b]=\{c\in \mathbb{N}^2: a\leq c\leq b\}$ is called an \textit{interval}. In what follows it is convenient also to define $[a,b]$ to be $[b,a]$ if $b\leq a$. Furthermore, we set $\overline{[a,b]}=\{x\in \mathbb{R}^2:a\leq x\leq b\}$.

Let $a=(i,j),b=(k,l)\in \mathbb{N}^2$ with $i<k$ and $j<l$. Then the elements $a$ and $b$ are called \textit{diagonal corners}, and the elements $c=(i,l)$ and $d=(k,j)$ are called \textit{anti-diagonal corners} of $[a,b]$.

A \textit{cell} $C$ is an interval of the form $[a,b]$, where $b=a+(1,1)$. The elements of $C$ are called \textit{vertices} of $C$. We denote the set of vertices of $C$ by $V(C)$. The intervals $[a,a+(1,0)]$, $[a+(1,0),a+(1,1)]$, $[a+(0,1),a+(1,1)]$ and $[a,a+(0,1)]$ are called \textit{edges} of $C$.

Let $\MP$ be a finite collection of cells of $\mathbb{N}^2$. Then two cells $C$ and $D$ are called \textit{connected} if there exists a sequence
$\MC:C=C_1,C_2,\ldots,C_t=D$ of cells of $\MP$ such that for all $i=1,\ldots,t-1$ the cells $C_i$ and $C_{i+1}$ intersect in an edge. If the cells in $\MC$ are pairwise distinct, then $\MC$ is called a \textit{path} between $C$ and $D$. A finite collection of cells $\MP$ is called a \textit{polyomino} if every two cells of $\MP$ are connected. The \textit{vertex set} of $\MP$, denoted $V(\MP)$, is defined to be $\bigcup_{C\in \MP}V(C)$. The area $\overline{\MP}$ covered by $\MP$ is given by $\bigcup_{C\in \MP}\overline{C}$. Figure~\ref{polyomino} shows a polyomino whose cells are marked by gray color.

A \textit{rectangular polyomino} is defined to be the collection of all cells inside an interval.

Let  $\MQ$ be an arbitrary collection of cells. Then each connected component of $\MQ$ is a polyomino.

\begin{figure}[hbt]
\begin{center}
\psset{unit=0.8cm}
\begin{pspicture}(2.5,-1)(2.5,5)
\psline(1,0)(3,0)
\psline(0,1)(4,1)
\psline(0,2)(5,2)
\psline(0,3)(5,3)
\psline(1,4)(4,4)
\psline(0,1)(0,3)
\psline(1,0)(1,4)
\psline(2,0)(2,4)
\psline(3,0)(3,4)
\psline(4,1)(4,4)
\psline(5,2)(5,3)
\pspolygon[style=fyp,fillcolor=light](1,0)(1,1)(2,1)(2,0)
\pspolygon[style=fyp,fillcolor=light](2,0)(2,1)(3,1)(3,0)
\pspolygon[style=fyp,fillcolor=light](0,1)(0,2)(1,2)(1,1)
\pspolygon[style=fyp,fillcolor=light](1,1)(1,2)(2,2)(2,1)
\pspolygon[style=fyp,fillcolor=light](3,1)(3,2)(4,2)(4,1)
\pspolygon[style=fyp,fillcolor=light](0,2)(0,3)(1,3)(1,2)
\pspolygon[style=fyp,fillcolor=light](1,2)(1,3)(2,3)(2,2)
\pspolygon[style=fyp,fillcolor=light](2,2)(2,3)(3,3)(3,2)
\pspolygon[style=fyp,fillcolor=light](3,2)(3,3)(4,3)(4,2)
\pspolygon[style=fyp,fillcolor=light](4,2)(4,3)(5,3)(5,2)
\pspolygon[style=fyp,fillcolor=light](1,3)(1,4)(2,4)(2,3)
\pspolygon[style=fyp,fillcolor=light](2,3)(2,4)(3,4)(3,3)
\pspolygon[style=fyp,fillcolor=light](3,3)(3,4)(4,4)(4,3)
\end{pspicture}
\end{center}
\caption{A polyomino}\label{polyomino}
\end{figure}
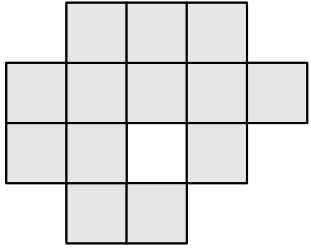

An interval $[a,b]$ with $a=(i,j)$ and $b=(k,j)$ is called a \textit{horizontal edge interval} of $\MP$ if the intervals $[(t,j),(t+1,j)]$ for $t=i,\ldots,k-1$ are edges of cells of $\MP$. Similarly, a \textit{vertical edge interval} of $\MP$ is defined to be an interval $[a,b]$ with $a=(i,j)$ and $b=(i,l)$ such that the intervals $[(i,t),(i,t+1)]$ for $t=j,\ldots,l-1$ are edges of cells of $\MP$.

We call an edge of a cell $C$ of $\MP$ a \textit{border edge} if it is not an edge of any other cell, and define the \textit{border} of $\MP$ to be the union of all border edges of $\MP$. A \textit{horizontal border edge interval} of $\MP$ is defined to be a horizontal edge interval of $\MP$ whose edges are border edges. Similarly, we define a \textit{vertical border edge interval} of $\MP$.

\medskip
Let $\MP$ be a polyomino and $\MI$ a rectangular polyomino such that $\MP\subset \MI$. Then the polyomino $\MP$  is called \textit{simple}, if each cell $C$ which does not belong to $\MP$ satisfies the following condition $(*)$: there exists a path $\MC: C=C_1,C_2,\ldots,C_t=D$ with $C_i\not \in \MP$ for all $i=1,\ldots,t$ and such that $D$ is not a cell of $\MI$. For example, the polyomino which is shown in Figure~\ref{polyomino} is not simple, while Figure~\ref{simple} shows a simple polyomino.

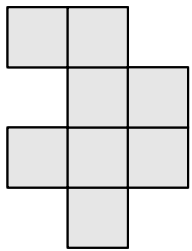
\begin{figure}[hbt]
\begin{center}
\psset{unit=0.8cm}
\begin{pspicture}(2,-1)(2,5)
\psline(1,0)(2,0)
\psline(0,1)(3,1)
\psline(0,2)(3,2)
\psline(0,3)(3,3)
\psline(0,4)(2,4)
\psline(0,1)(0,2)
\psline(0,3)(0,4)
\psline(1,0)(1,4)
\psline(2,0)(2,4)
\psline(3,1)(3,3)
\pspolygon[style=fyp,fillcolor=light](1,0)(1,1)(2,1)(2,0)
\pspolygon[style=fyp,fillcolor=light](0,1)(0,2)(1,2)(1,1)
\pspolygon[style=fyp,fillcolor=light](1,1)(1,2)(2,2)(2,1)
\pspolygon[style=fyp,fillcolor=light](2,1)(2,2)(3,2)(3,1)
\pspolygon[style=fyp,fillcolor=light](1,2)(1,3)(2,3)(2,2)
\pspolygon[style=fyp,fillcolor=light](2,2)(2,3)(3,3)(3,2)
\pspolygon[style=fyp,fillcolor=light](0,3)(0,4)(1,4)(1,3)
\pspolygon[style=fyp,fillcolor=light](1,3)(1,4)(2,4)(2,3)
\end{pspicture}
\end{center}
\caption{A simple polyomino}\label{simple}
\end{figure}

Let $\MP$ be a polyomino and let $\MH$ be the collection of cells $C\notin \MP$ which do not satisfy condition $(*)$. The connected components of $\MH$ are called the \textit{holes} of $\MP$. For example, the polyomino which is shown in Figure~\ref{polyomino} has exactly one hole consisting of just one cell. Note that $\MP$ is simple if and only if it is hole-free. Each hole of $\MP$ is a polyomino. In fact, even one has

\begin{Lemma}
\label{hole}
Each hole of a polyomino is a simple polyomino.
\end{Lemma}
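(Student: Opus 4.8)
The plan is to verify the definition of simplicity for a hole $\MH'$ directly, routing every required escape path through $\MP$ and exploiting that $\MP$ is connected, rather than attempting a separation argument across the border of $\MH'$. Since $\MH'$ is a connected component of $\MH$ and is contained in $\MI$, it is finite and connected, hence a polyomino, and it remains only to show that it is hole-free. The first key step is the adjacency observation that \emph{every cell $D$ which is edge-adjacent to $\MH'$ but does not lie in $\MH'$ belongs to $\MP$}. To see this, suppose $D\notin\MP$ and let $C\in\MH'$ be an edge-neighbor of $D$. If $D$ satisfied $(*)$, then prepending $C$ to an escape path of $D$ would give one for $C$, contradicting $C\in\MH$; hence $D\in\MH$, and being edge-adjacent to $C\in\MH'$ it lies in the same component $\MH'$, contradicting $D\notin\MH'$.

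Next I would fix the smallest rectangular polyomino $\MI'$ containing $\MH'$; since $\MH'\subset\MI$ we have $\MI'\subseteq\MI$. The top row of $\MI'$ contains some cell $H$ of $\MH'$, so the cell $H+(0,1)$ lies outside $\MI'$, is edge-adjacent to $\MH'$, and is not in $\MH'$; by the adjacency observation it lies in $\MP$. Thus there is a cell $P^{*}\in\MP$ with $P^{*}\notin\MI'$, i.e. $\MP$ reaches outside the bounding rectangle of $\MH'$.

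It then remains to show that every cell $C\in\MI'\setminus\MH'$ is joined to a cell outside $\MI'$ by a path avoiding $\MH'$, which is exactly condition $(*)$ for $\MH'$ with respect to $\MI'$. As $\MI'\subseteq\MI$, such a $C$ falls into three cases. If $C\in\MP$, then since $\MP$ is connected and $P^{*}\in\MP\setminus\MI'$, a path inside $\MP$ joins $C$ to $P^{*}$ and automatically avoids $\MH'$. If $C$ lies in a different hole $\MH''\neq\MH'$, then $\MH''$ is connected and, by the adjacency observation applied to $\MH''$, is edge-adjacent to $\MP$; concatenating a path in $\MH''$ with a path in $\MP$ ending at $P^{*}$ produces a path from $C$ using only cells of $\MH''\cup\MP$, hence avoiding $\MH'$. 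Finally, if $C$ satisfies $(*)$ with respect to $\MP$, its escape path $C=C_1,\dots,C_t=D$ with $D\notin\MI$ has the property that every $C_i$ again satisfies $(*)$ (via the tail of the path), so no $C_i$ lies in $\MH\supseteq\MH'$; as $D\notin\MI'$, this already exhibits the desired escape. In every case $C$ escapes $\MI'$ through cells avoiding $\MH'$, so $\MH'$ is simple.

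The step I expect to demand the most care is the adjacency observation, since it is what legitimizes routing all escapes through the connected polyomino $\MP$ and, via $P^{*}$, out of the bounding rectangle. The seemingly more natural route---arguing that a hole of $\MH'$ would force a path in the connected $\MP$ to cross the separating region $\overline{\MH'}$---founders on the familiar corner/diagonal (four- versus eight-connectivity) subtleties of planar grid regions, and the routing argument above is chosen precisely to avoid them.
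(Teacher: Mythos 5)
Your proof is correct, and it takes a genuinely different route from the paper's. The paper argues by contradiction: assuming the hole $\MP'$ of $\MP$ is not simple, it picks a hole $\MP''$ of $\MP'$, takes a cell $C$ of $\MP''$ having a border edge of $\MP''$, asserts that $C$ shares an edge with a cell $D\in\MP'$, deduces that $C\in\MP$ (this is exactly your adjacency observation, applied to $\MP'$ as a hole of $\MP$), and then invokes the connectivity of $\MP$ to produce a path inside $\MP$ from $C$ to a cell of $\MP\setminus\MP''$, contradicting the fact that $\MP''$ is a hole. You instead verify condition $(*)$ directly for every cell outside the hole $\MH'$: you isolate the adjacency observation as a standalone lemma, use the minimal bounding rectangle $\MI'$ to manufacture an escape target $P^*\in\MP$ lying outside $\MI'$, and route every cell of $\MI'\setminus\MH'$ out of $\MI'$ through $\MP$, or through another hole into $\MP$, or along its own escape path. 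What your version buys is completeness: the steps the paper leaves implicit are precisely the ones you spell out, namely why the cell across a border edge of $\MP''$ lies in $\MP'$, why $\MP\setminus\MP''$ is nonempty (your $P^*$ construction), and why a path in $\MP$ exiting $\MP''$ is incompatible with $\MP''$ being a hole (the adjacency observation again, now applied to $\MP''$). What the paper's contradiction format buys is brevity: it only ever has to discuss cells of the single hole $\MP''$, never your three-way case analysis. Note also that the ``more natural'' separation argument you worried about is avoided by the paper as well; both proofs rest on the same two pillars, the adjacency observation and the connectivity of $\MP$, differing in whether they are deployed in a direct verification or inside a proof by contradiction.
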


\begin{proof}
Let $\MP'$ be a hole of the simple polyomino $\MP$, and assume that $\MP'$ is not simple. Let $\MP''$ be a hole of $\MP'$. Then $\MP''$ is again a polyomino. Let $C$ be a cell of $\MP''$ which has a border edge of $\MP''$. Then $C$ shares an edge with a cell $D\in\MP'$. Since $\MP'$ is a connected component of the set $\MH$ of cells not belonging to $\MP$ which do not satisfy condition $(*)$ and since $C$ has a common edge with $D$ it follows that $C\in \MP$. However since $\MP$ is connected there exists a path of cells which all belong to $\MP$ and which connect $C$ with a cell of $\MP\setminus \MP''$, contradicting the fact that $\MP''$ is a hole.
\end{proof}

The polyomino in Figure \ref{polyomino} has two cells intersecting in only one vertex which does not belong to any other cell. This can not happen if the polyomino is simple.

\begin{Lemma}
\label{meet}
Let $\MP$ be a simple polyomino. Then there does not exist any vertex $v$ which belongs to exactly two cells $C$ and $C'$ of $\MP$ such that $C\sect C'=\{v\}$.
\end{Lemma}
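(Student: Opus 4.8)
The plan is to argue by contradiction via a Jordan curve argument. Assume such a vertex $v=(i,j)$ exists, belonging to exactly two cells $C$ and $C'$ with $C\sect C'=\{v\}$. The cells having $v$ as a corner are the four unit cells $C_{\mathrm{LL}},C_{\mathrm{LR}},C_{\mathrm{UL}},C_{\mathrm{UR}}$ lying to the lower-left, lower-right, upper-left and upper-right of $v$. Two such cells sharing an edge meet in more than one vertex, so $C$ and $C'$ must form a diagonal pair; after a reflection we may assume $C=C_{\mathrm{LL}}$ and $C'=C_{\mathrm{UR}}$. Since by hypothesis $v$ lies on no further cell of $\MP$, the two remaining cells $C_{\mathrm{LR}}$ and $C_{\mathrm{UL}}$ do not belong to $\MP$.

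Next I would produce a Jordan curve inside $\overline{\MP}$ passing through $v$. As $\MP$ is connected, fix a path $C=D_1,D_2,\dots,D_m=C'$ of cells of $\MP$; since the only cells of $\MP$ containing $v$ are $C$ and $C'$, no intermediate $D_i$ contains $v$. Joining the centres of consecutive cells through the midpoints of their shared edges and erasing loops yields a simple arc $\gamma\subseteq\overline{\MP}$ from the centre of $C$ to the centre of $C'$ that avoids $v$. I then close $\gamma$ with the diagonal segment $\sigma$ joining these two centres and passing through $v$. Because $\sigma\subseteq\overline{C}\union\overline{C'}\subseteq\overline{\MP}$ and meets $\gamma$ only at its two endpoints, the union $\Gamma=\gamma\union\sigma$ is a simple closed curve contained in $\overline{\MP}$.

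Finally I would trap a hole. In a small neighbourhood of $v$ the curve $\Gamma$ coincides with the diagonal $\sigma$, so $C_{\mathrm{LR}}$ and $C_{\mathrm{UL}}$ lie on opposite sides of $\Gamma$; by the Jordan curve theorem exactly one of them, say $E$, lies in the bounded region $R$ enclosed by $\Gamma$. Since $\Gamma\subseteq\overline{\MI}$ and $\overline{\MI}$ is convex we have $R\subseteq\overline{\MI}$. Moreover, any path of cells starting at $E$ and avoiding $\MP$ gives a curve through the corresponding centres that avoids $\overline{\MP}$: the interior of a cell not in $\MP$ is disjoint from $\overline{\MP}$, and the midpoint of an edge shared by two cells not in $\MP$ lies on no cell of $\MP$. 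Such a curve cannot cross $\Gamma\subseteq\overline{\MP}$, so it stays inside $R\subseteq\overline{\MI}$, whence every cell reachable from $E$ by a $\MP$-avoiding path lies in $\MI$. Thus $E$ violates condition $(*)$ and lies in a hole of $\MP$, contradicting that $\MP$ is simple.

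The conceptual heart is the observation that the chain of cells joining $C$ to $C'$, together with the diagonal pinch at $v$, separates the two empty diagonal cells $C_{\mathrm{LR}}$ and $C_{\mathrm{UL}}$. The main technical obstacle is making this rigorous, i.e.\ verifying that $\gamma$ may be taken simple and disjoint from $\sigma$ away from its endpoints so that $\Gamma$ is a genuine Jordan curve; once that is granted, the Jordan curve theorem together with the convexity of $\overline{\MI}$ does the rest.
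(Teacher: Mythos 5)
Your proposal is correct and follows essentially the same route as the paper's own proof: both argue by contradiction, note that $C$ and $C'$ must be the diagonal pair of cells at $v$ with the other two cells $D$ and $D'$ outside $\MP$, take a path of cells of $\MP$ connecting $C$ and $C'$ (which exists since $\MP$ is a polyomino), and conclude that this path together with the pinch at $v$ traps one of $D$, $D'$ in a hole, contradicting simplicity. The paper merely asserts the trapping step, whereas you make it rigorous with an explicit Jordan curve through the cell centres and the diagonal at $v$; this is a faithful (and more detailed) elaboration of the same argument, not a different method.
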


\begin{proof}
Suppose on the contrary that there exists such a vertex $v$. According to Figure~\ref{v}, the only cells of $\MP$ which contain $v$ could be the four cells $C$, $C'$, $D$ and $D'$. By our assumption, we may assume that $C$ and $C'$ belong to $\MP$ and $D$ and $D'$ do not belong to $\MP$. Since $\MP$ is a polyomino, there exists a path of cells of $\MP$ connecting $C$ and $C'$. Thus, either $D$ or $D'$ is contained in a hole of $\MP$. It contradicts the fact that $\MP$ is a simple polyomino.
\begin{figure}[hbt]
\begin{center}
\psset{unit=0.9cm}
\begin{pspicture}(-1,-2)(2,1)
\psline(-1,0)(0,0)
\psline(-1,0)(-1,-1)
\psline(-1,-1)(0,-1)
\psline(0,-1)(0,0)
\psline(0,0)(0,1)
\psline(0,1)(1,1)
\psline(0,0)(1,0)
\psline(1,0)(1,1)
\psline[linestyle=dashed](-1,1)(0,1)
\psline[linestyle=dashed](-1,1)(-1,0)
\psline[linestyle=dashed](1,-1)(1,0)
\psline[linestyle=dashed](0,-1)(1,-1)
\rput(0.5,0.5){$C'$}
\rput(-0.5,-0.5){$C$}
\rput(-0.5,0.5){$D$}
\rput(0.5,-0.5){$D'$}
\rput(0.2,-0.2){$v$}
\rput(0,0){$\bullet$}
\end{pspicture}
\end{center}
\caption{Two cells $C$ and $C'$ belong to $\MP$}\label{v}
\end{figure}
\end{proof}

\begin{Corollary}
\label{endpoint}
Let $\MP$ be a simple polyomino and let $I$ and $I'$ be  two distinct maximal border edge intervals of $\MP$ with $I\sect I'\neq \emptyset$. Then their intersection is a common endpoint of $I$ and $I'$. Furthermore, at most two maximal border edge intervals of $\MP$ have a nontrivial  intersection.
\end{Corollary}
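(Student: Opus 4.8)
The plan is to reduce everything to a purely local analysis at a single vertex, with Lemma~\ref{meet} as the essential input.

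First I would isolate the auxiliary fact that two distinct maximal border edge intervals of the same orientation are disjoint. Two horizontal intervals lying on different lines $y=j$ obviously cannot meet; and if they lie on the same line and shared a vertex $v$, then one would end with a border edge abutting $v$ from the left and the other would begin with a border edge abutting $v$ from the right, so these two adjacent border edges would form a horizontal border edge interval strictly containing each, contradicting maximality (forcing the two to coincide). The same reasoning applies to vertical intervals. Consequently, if $I\sect I'\neq\emptyset$ with $I\neq I'$, then one of them, say $I$, is horizontal and the other, $I'$, is vertical; since a horizontal and a vertical segment meet in at most one point, $I\sect I'=\{v\}$ for a single vertex $v$.

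The core step is to show that $v$ is an endpoint of $I$ (and, by symmetry, of $I'$). Suppose instead that $v=(s,j)$ is interior to the horizontal interval $I$, so that both horizontal edges $[(s-1,j),(s,j)]$ and $[(s,j),(s+1,j)]$ are border edges. Write $A,B,C,D$ for the four unit cells meeting at $v$ (lower-left, lower-right, upper-left, upper-right), with the convention that a cell lying outside $\NN^2$ is regarded as not belonging to $\MP$. The border-edge condition on the two horizontal edges forces exactly one of $\{A,C\}$ and exactly one of $\{B,D\}$ to belong to $\MP$, leaving four possibilities for the pair of cells of $\MP$ around $v$. In the cases $\{A,B\}$ and $\{C,D\}$, neither vertical edge at $v$ separates a cell of $\MP$ from a cell not in $\MP$, so no vertical border edge emanates from $v$, contradicting $v\in I'$. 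In the cases $\{A,D\}$ and $\{B,C\}$ the two cells of $\MP$ meet only in the vertex $v$, which is forbidden by Lemma~\ref{meet}. Hence $v$ cannot be interior to $I$, so it is an endpoint, and interchanging the roles of $I$ and $I'$ shows it is an endpoint of $I'$ as well. This four-cell case analysis — and in particular verifying that the two cases excluded by simplicity are precisely the ``diagonal'' ones while the other two are excluded by the absence of a vertical border edge — is where I expect the real work to lie.

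Finally, for the last assertion, suppose three distinct maximal border edge intervals shared a common point $v$. By the auxiliary fact no two of them can have the same orientation, yet among three intervals at least two must both be horizontal or both be vertical, a contradiction. Therefore at most two maximal border edge intervals can meet in a common point, which is exactly the stated conclusion.
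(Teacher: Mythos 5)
Your proposal is correct and follows essentially the same route as the paper: maximality forces distinct same-orientation intervals to be disjoint, so $I$ and $I'$ are perpendicular and meet in a single vertex $v$; a four-cell analysis at $v$, with Lemma~\ref{meet} excluding the diagonal configurations, shows $v$ must be a common endpoint; and the pigeonhole argument on orientations gives the final claim. The only difference is organizational: the paper splits cases by whether $v$ is an endpoint of one or of neither interval, whereas you split by which cells around $v$ lie in $\MP$ --- the same local argument either way.
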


\begin{proof}
Let $I=[a,b]$ and $I'=[c,d]$. The edge intervals $I$ and $I'$ are not both horizontal or vertical edge intervals, since otherwise their maximality implies that they are disjoint. Suppose that $I$ is a horizontal edge interval and $I'$ is a vertical edge interval. So, obviously, they intersect in one vertex, say $v$. Suppose that $v$ is not an endpoint of $I$ or $I'$. If $v$ is an endpoint of just one of them, then without loss of generality, we may assume that we are in the case which is shown on the left hand side of Figure~\ref{intervals}. Thus, since $I$ and $I'$ are maximal border edge intervals, it follows that among the four possible cells of $\NN^2$ which contain $v$, exactly one of them belongs to $\MP$, which is a contradiction. If $v$ is not an endpoint of any of $I$ and $I'$, then we are in the case which is displayed on the right hand side of Figure~\ref{intervals}. Among four possible cells of $\NN^2$ which contain $v$, only a pair of them, say $C$ and $C'$, with $C\sect C'=\{v\}$, belong to $\MP$, since the edges of $I$ and $I'$ are all border edges. But, by Lemma~\ref{meet}, this is impossible, since $\MP$ is simple. Thus, $v$ has to be  a common endpoint of $I$ and $I'$.
\begin{figure}[hbt]
\begin{center}
\psset{unit=0.9cm}
\begin{pspicture}(-0.2,-2)(2,3)
\psline(-1,0)(2,0)
\psline(0,0)(0,2)
\rput(-1.25,0){$a$}
\rput(2.25,0){$b$}
\rput(0,2.3){$d$}
\rput(0,-0.25){$v=c$}
\rput(0,0){$\bullet$}
\rput(-1,0){$\bullet$}
\rput(2,0){$\bullet$}
\rput(0,2){$\bullet$}
\end{pspicture}
\psset{unit=0.9cm}
\begin{pspicture}(-5.5,-2)(2,3)
\psline(-1,0)(2,0)
\psline(0,2)(0,-1)
\rput(-1.25,0){$a$}
\rput(2.25,0){$b$}
\rput(0,2.3){$d$}
\rput(0.25,-0.25){$v$}
\rput(0,-01.25){$c$}
\rput(0,0){$\bullet$}
\rput(-1,0){$\bullet$}
\rput(2,0){$\bullet$}
\rput(0,2){$\bullet$}
\rput(0,-1){$\bullet$}
\end{pspicture}
\end{center}
\caption{The vertex $v$ is not a common endpoint of $I$ and $I'$}\label{intervals}
\end{figure}
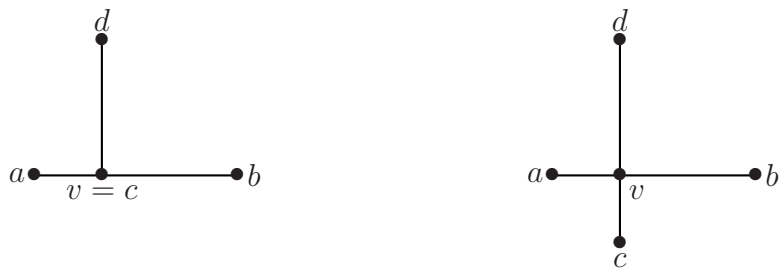

Now, suppose more than two maximal border edge intervals have a nontrivial intersection. Then this intersection is a common endpoint of these intervals. Thus at least two of these intervals are either horizontal or vertical, contradicting the fact that they are all maximal.
\end{proof}

Now, we present some concepts and facts about rectilinear polygons which are used in the course of the proof of the main result of this paper.

A \textit{rectilinear polygon} is a polygon whose edges meet orthogonally. It is easily seen that the number of edges of a rectilinear polygon is even. Note that rectilinear polygons are also known as \textit{orthogonal polygons}. A rectilinear polygon is shown in Figure~\ref{rectilinear}.

\begin{figure}[hbt]
\begin{center}
\psset{unit=0.4cm}
\begin{pspicture}(3.5,0)(4.5,10)
\rput(1.25,0)
{
\pspolygon(-2,3)(3,3)(3,4)(4,4)(4,1)(2,1)(2,0)(6,0)(6,5)(8,5)(8,7)(4,7)(4,10)(1,10)(1,8)(-2,8)(-2,3)
}
\end{pspicture}
\end{center}
\caption{A rectilinear polygon}\label{rectilinear}
\end{figure}
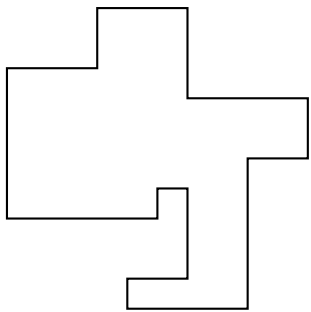

A rectilinear polygon is called \textit{simple} if it does not self-intersect. The rectilinear polygon in Figure~\ref{rectilinear} is a simple rectilinear polygon.

Let $R$ be a  simple rectilinear polygon. The bounded area whose border is $R$ is called the \textit{interior} of $R$. By the \textit{open interior} of $R$ we mean the interior of $R$ without its boundary.

A simple rectilinear polygon has two types of corners: the corners in which the smaller angle ($90$ degrees) is interior to the polygon are called \textit{convex corners}, and the corners in which the larger angle ($270$ degrees) is interior to the polygon are called \textit{concave corners}.

\medskip
Let $E_1,\ldots,E_m$ be the border edges of $\MP$. Then we set $B(\MP)=\bigcup_{i=1}^m\overline{E}_i$. Observe that the border of $\MP$ as defined before  is the set of lattice points which belong to $B(\MP)$.

\begin{Lemma}
\label{borderpolygon}
Let $\MP$ be a simple polyomino. Then $B(\MP)$ is a simple rectilinear polygon.
\end{Lemma}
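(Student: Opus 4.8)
The plan is to establish three features of the set $B(\MP)$ in turn: that it is built from axis-parallel unit segments meeting orthogonally (hence rectilinear), that it is a disjoint union of non-self-intersecting closed curves, and finally that it consists of a single such curve. Throughout I use that a given edge is a border edge of $\MP$ exactly when precisely one of its two flanking cells belongs to $\MP$, so that $B(\MP)$ is the topological boundary of the region $\overline{\MP}$.

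First I would analyze the local picture at a lattice point $v$. The (up to) four unit cells having $v$ as a vertex, together with the four edges emanating from $v$, are governed by which of these cells lie in $\MP$: an edge at $v$ is a border edge precisely when exactly one of its two flanking cells lies in $\MP$. Running through the possibilities for the subset of the four cells belonging to $\MP$, the number of border edges incident to $v$ is always even, and it equals $4$ only in the two ``diagonal'' configurations, namely those in which the two cells of $\MP$ at $v$ meet only in $v$. By Lemma~\ref{meet} such configurations cannot occur in a simple polyomino. Hence every lattice point of $B(\MP)$ is incident to exactly two border edges.

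From this degree-two property it follows that $B(\MP)$ is a disjoint union of closed polygonal curves without self-intersections. Since every border edge is horizontal or vertical, Corollary~\ref{endpoint} shows that each endpoint of a maximal border edge interval is the common endpoint of exactly one horizontal and one vertical maximal interval; thus consecutive edges of each curve meet orthogonally, and every component of $B(\MP)$ is a simple rectilinear polygon. It therefore remains only to prove that $B(\MP)$ is connected, i.e. that there is a single such component.

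For connectedness I would argue by contradiction, and this is where the hole-freeness of $\MP$ enters. If $B(\MP)$ had at least two components, then among the finitely many disjoint Jordan curves one, say $\gamma$, has minimal interior $\Omega$ and is contained in the interior of another, so $\MP$ has cells not enclosed by $\gamma$. Orienting $\gamma$ so that the cells of $\MP$ meeting it lie on a fixed side, connectedness of $\MP$ forces these $\MP$-cells to lie \emph{outside} $\Omega$: a path of cells of $\MP$ joining a cell inside $\Omega$ to one outside would have to cross $\gamma$, i.e.\ step across a border edge from an $\MP$-cell to a non-$\MP$-cell, which is impossible. Since $\gamma$ is innermost, no border edge lies strictly inside $\Omega$, so all cells inside $\Omega$ have the same status; as the cells just inside $\gamma$ are not in $\MP$, the whole of $\Omega$ is filled by cells not in $\MP$, while the cells just outside $\gamma$ belong to $\MP$. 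This exhibits a hole of $\MP$, contradicting simplicity, so $B(\MP)$ is a single simple rectilinear polygon. The main obstacle is precisely this last step: the local degree count and the orthogonality of the corners are routine once set up, but turning ``hole-free'' into ``connected boundary'' genuinely requires a planarity (Jordan-curve) input together with the global connectedness of $\MP$, and care is needed to determine on which side of each boundary curve the polyomino lies.
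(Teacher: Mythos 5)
Your proof is correct, and it reaches the conclusion by a genuinely different arrangement of the same two ingredients (connectedness of $\MP$ and hole-freeness), so a comparison is worthwhile. For the local structure, the paper does not use your vertex-degree count: it shows, by a case analysis on the cells adjacent to an endpoint of a maximal horizontal border edge interval (together with Corollary~\ref{endpoint}), that each such endpoint is also the endpoint of exactly one maximal vertical border edge interval, and then traces these intervals to build a closed non-self-intersecting curve $R$; your observation that Lemma~\ref{meet} forces every vertex of $B(\MP)$ to lie on exactly two border edges packages the same information more cleanly, since ``degree two everywhere'' immediately yields the decomposition of $B(\MP)$ into disjoint simple closed rectilinear curves. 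The real divergence is in the connectedness step. The paper supposes a second curve $R'$ exists, shows $R\sect R'=\emptyset$, uses simplicity to conclude that the interiors of \emph{both} $R$ and $R'$ consist entirely of cells of $\MP$, and then derives a contradiction from the connectedness of $\MP$: a path of cells of $\MP$ joining the two interiors would have to cross a border edge. You instead take an innermost component $\gamma$, use connectedness of $\MP$ (via the impossibility of crossing $\gamma$) to show that its interior $\Omega$ contains \emph{no} cell of $\MP$, and exhibit the cells in $\Omega$ as a hole, contradicting simplicity directly. Your version makes the geometric content more transparent---extra boundary components correspond exactly to holes---while the paper's version avoids any need to order the curves by area or to identify an outer component. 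One point in your write-up deserves care: the assertion that the minimal-interior curve ``is contained in the interior of another'' is true, but proving it requires exactly the Jordan-curve-type input you flag (existence of an outer component enclosing all others); note, however, that you never need it in that strength. Minimality alone guarantees that no other component of $B(\MP)$ lies inside $\Omega$, so the cells of $\MP$ flanking those other components lie outside $\Omega$, which is the only consequence (``$\MP$ has cells not enclosed by $\gamma$'') that your subsequent argument uses. Since the paper itself invokes planar-topological facts at the same level of rigor (e.g., that a cell inside $R$ not belonging to $\MP$ would violate simplicity, and that a path meeting $R$ must do so in an edge), this is not a gap relative to the paper's standard.
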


\begin{proof}
First we show that for each maximal horizontal (resp. vertical) border edge interval $I=[a,b]$ of $\MP$, there exists a unique maximal vertical (resp. horizontal) border edge interval $I'$ such that $a$ is an endpoint of it. By Corollary~\ref{endpoint} the vertex $a$  is then the  endpoint of precisely $I$ and $I'$.   Without loss of generality let $I=[a,b]$ be a horizontal maximal border edge interval of $\MP$. Let $C$ be the only cell of $\MP$ for which $a$ is a vertex, and which has a border edge contained in $I$. First we assume that $a$ is a diagonal corner of $C$ which implies that $C$ is upside of $I$, see Figure~\ref{lemma1}. The argument of the other case in which $a$ is an anti-diagonal corner of $C$, and hence $C$ is downside of $I$, is similar.
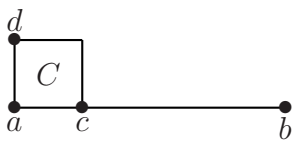
\begin{figure}[hbt]
\begin{center}
\psset{unit=0.9cm}
\begin{pspicture}(2,0)(2,2)
\psline(0,0)(4,0)
\psline(0,0)(0,1)
\psline(0,1)(1,1)
\psline(1,0)(1,1)
\rput(0,-0.25){$a$}
\rput(4,-0.3){$b$}
\rput(1,-0.25){$c$}
\rput(0,1.3){$d$}
\rput(0.5,0.5){$C$}
\rput(0,0){$\bullet$}
\rput(1,0){$\bullet$}
\rput(4,0){$\bullet$}
\rput(0,1){$\bullet$}
\end{pspicture}
\end{center}
\caption{The interval $[a,b]$ and a cell $C$}\label{lemma1}
\end{figure}

Referring to Figure~\ref{lemma1}, we distinguish two cases: either the unique cell $D$, different from $C$ sharing the edge $[a,d]$ with $C$, belongs to $\MP$ or not.

Let us first assume that $D\notin \MP$. Then $[a,d]$ is a border edge of $\MP$, and hence it is contained in a maximal vertical border edge interval $I'$ of $\MP$ such that by Corollary~\ref{endpoint}, $a$ is an endpoint of $I'$. Hence $I'$ is the unique maximal vertical border edge interval of $\MP$ for which $a$ is an endpoint.

Next assume that $D\in \MP$. Then the cell $C'$ belongs to $\MP$ (see Figure~\ref{lemma3}), because  $[a,b]$ is a maximal horizontal border edge interval, so that   $[e,a]$ can not be a border edge. The edge $[f,a]$ is a border edge, since otherwise there is a cell containing both of the edges $[f,a]$ and $[a,c]$, contradicting the fact that $[a,c]$ is a border edge. Therefore, there exists the unique maximal vertical border edge interval $I'$ which contains $[f,a]$ such that $a$ is an endpoint of $I'$.
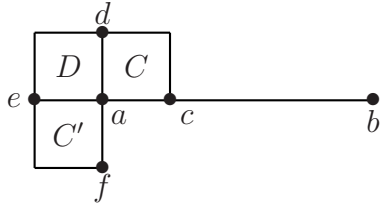
\begin{figure}[hbt]
\begin{center}
\psset{unit=0.9cm}
\begin{pspicture}(1.5,-2)(1.5,2)
\psline(0,0)(4,0)
\psline(-1,0)(0,0)
\psline(-1,0)(-1,-1)
\psline(-1,-1)(0,-1)
\psline(0,-1)(0,0)
\psline(0,0)(0,1)
\psline(0,1)(1,1)
\psline(1,0)(1,1)
\psline(-1,1)(0,1)
\psline(-1,1)(-1,0)
\rput(0.5,0.5){$C$}
\rput(-0.5,-0.5){$C'$}
\rput(-0.5,0.5){$D$}
\rput(0.25,-0.25){$a$}
\rput(4,-0.3){$b$}
\rput(1.25,-0.25){$c$}
\rput(0,1.3){$d$}
\rput(0,-1.3){$f$}
\rput(-1.3,0){$e$}
\rput(0,0){$\bullet$}
\rput(1,0){$\bullet$}
\rput(4,0){$\bullet$}
\rput(0,1){$\bullet$}
\rput(-1,0){$\bullet$}
\rput(0,-1){$\bullet$}
\end{pspicture}
\end{center}
\caption{Intervals $[a,c]$ and $[f,a]$ are two border edges}\label{lemma3}
\end{figure}

The same argument can be applied for $b$ to show that $b$ is also just the endpoint of $I$ and of a unique maximal vertical border edge interval $I'$ of $\MP$.

Now, let $I_1$ be a maximal horizontal border edge interval of $\MP$. By what we have shown before, there exists a unique sequence of maximal border edge intervals $I_1,I_2,\ldots$ of $\MP$ with $I_i=[a_i,a_{i+1}]$ such that they are alternatively horizontal and vertical. Since $V(\MP)$ is finite, there exists a smallest integer $r$ such that for some $i<r-1$, $I_i\cap I_r\neq \emptyset$. Since $I_i$ and $I_r$ are distinct maximal border edge intervals of $\MP$, they intersect in one of their endpoints, by Corollary~\ref{endpoint}. Thus, $I_i\cap I_r=\{a_i\}$, since $r\neq i$ and by Corollary~\ref{endpoint}, $a_{i+1}$ can not be a common vertex between three maximal border edge intervals $I_i$, $I_{i+1}$ and $I_r$. It follows that $i=1$, since otherwise $a_i$ also belong to $I_{i-1}$ which is a contradiction, by Corollary~\ref{endpoint}.

Our discussion shows that $R=\Union_{j=1}^r\bar{I_j}$ is a simple rectilinear polygon. Suppose that  $R\neq B(\MP)$. Then there exists a maximal border edge interval $I_1'$ which is different from the intervals $I_j$. As we did for $I_1$ we may start with $I_1'$ to construct a sequence of border edge intervals $I'_j$ to obtain a simple rectilinear  polygon $R'$ whose edges are formed by some maximal  border edge intervals of $\MP$. We claim that $R\sect R'=\emptyset$. Suppose this is not the case, then $I_j\sect I'_k\neq \emptyset$  for some $j$ and $k$, and hence  by Corollary~\ref{endpoint} these two intervals meet at a common endpoint. Thus it follows that $I'_k$ also has a common intersection with one of the neighbor intervals $I_t$ of $I_j$, contradicting the fact that no three maximal border edge intervals intersect nontrivially, see Corollary~\ref{endpoint}. Hence $R\sect R'=\emptyset$, as we claimed.

All the cells of the interior of $R$ must belong to $\MP$, because otherwise $\MP$ is not simple. It follows that $R'$ does not belong to the interior of $R$, and vice versa. Thus the interior cells  of $R$ and $R'$ form two disjoint sets of cells of $\MP$. Since $\MP$ is a polyomino, there exists a path of cells connecting the interior cells of $R$ with those of $R'$. The edges where this path meets $R$ and $R'$ can not be border edges, a contradiction. Thus we conclude that $R=B(\MP)$.
\end{proof}

For a polyomino $\MP$, a function $\alpha\: V(\MP) \to \ZZ$ is called an \textit{admissible labeling} of $\MP$ (see \cite{Q}), if for all maximal horizontal and vertical edge intervals $I$ of $\MP$, we have
\[
\sum_{a\in I}\alpha(a)=0.
\]
In Figure~\ref{admissible} an admissible labeling of a polyomino is shown.

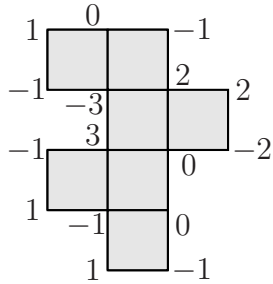
\begin{figure}[hbt]
\begin{center}
\psset{unit=0.8cm}
\begin{pspicture}(2,-1)(2,5)
\psline(1,0)(2,0)
\psline(0,1)(2,1)
\psline(0,2)(2,2)
\psline(0,4)(2,4)
\psline(0,1)(0,2)
\psline(0,3)(0,4)
\psline(1,0)(1,4)
\psline(2,0)(2,4)
\rput(0.75,0){$1$}
\rput(0.65,0.75){$-1$}
\rput(2.25,0.75){$0$}
\rput(2.4,0){$-1$}
\rput(-0.25,1){$1$}
\rput(-0.35,2){$-1$}
\rput(-0.35,3){$-1$}
\rput(-0.25,4){$1$}
\rput(0.75,2.25){$3$}
\rput(0.6,2.75){$-3$}
\rput(0.75,4.25){$0$}
\rput(2.35,1.75){$0$}
\rput(2.25,3.25){$2$}
\rput(2.4,4){$-1$}
\rput(3.4,2){$-2$}
\rput(3.25,3){$2$}
\pspolygon[style=fyp,fillcolor=light](1,0)(1,1)(2,1)(2,0)
\pspolygon[style=fyp,fillcolor=light](0,1)(0,2)(1,2)(1,1)
\pspolygon[style=fyp,fillcolor=light](1,1)(1,2)(2,2)(2,1)
\pspolygon[style=fyp,fillcolor=light](1,2)(1,3)(2,3)(2,2)
\pspolygon[style=fyp,fillcolor=light](2,2)(2,3)(3,3)(3,2)
\pspolygon[style=fyp,fillcolor=light](0,3)(0,4)(1,4)(1,3)
\pspolygon[style=fyp,fillcolor=light](1,3)(1,4)(2,4)(2,3)
\end{pspicture}
\end{center}
\caption{An admissible labeling}\label{admissible}
\end{figure}

An \textit{inner interval} $I$ of a polyomino $\MP$ is an interval with the property that all cells inside $I$ belong to $\MP$.

Let $I$ be an inner interval of a polyomino $\MP$. Then we introduce the admissible labeling $\alpha_{I}: V(\MP) \to \ZZ$ of $\MP$, which will be used in the proof of our main theorem, as follows:
\begin{equation}
\alpha_{I}(a)=\left \{\begin {array}{lll}
-1,&\text{if $a$ is a diagonal corner of $I$},\\
1,&\text{if $a$ is an anti-diagonal corner of $I$},\\
0,&\text{otherwise}.
\end{array}\right.
\nonumber
\end{equation}

\medskip
Now, we introduce a special labeling of a simple polyomino $\MP$, called a \textit{border labeling}. By Lemma~\ref{borderpolygon}, $B(\MP)$ is a rectilinear polygon. While walking counter clockwise around $B(\MP)$, we label the corners alternatively by $+1$ and $-1$ and label all the other vertices of $\MP$ by $0$. Since $B(\MP)$ has even number of vertices, this labeling is always possible for $\MP$. Also, it is obvious that every simple polyomino has exactly two border labelings. Figure~\ref{border} shows a border labeling of the polyomino which was displayed in Figure~\ref{admissible}.

\begin{figure}[hbt]
\begin{center}
\psset{unit=0.8cm}
\begin{pspicture}(2,-1)(2,5)
\psline(1,0)(2,0)
\psline(0,1)(2,1)
\psline(0,2)(2,2)
\psline(0,4)(2,4)
\psline(0,1)(0,2)
\psline(0,3)(0,4)
\psline(1,0)(1,4)
\psline(2,0)(2,4)
\rput(0.75,0){$1$}
\rput(0.65,0.75){$-1$}
\rput(2.25,0.75){$0$}
\rput(2.4,0){$-1$}
\rput(-0.25,1){$1$}
\rput(-0.35,2){$-1$}
\rput(-0.25,3){$1$}
\rput(-0.35,4){$-1$}
\rput(0.75,2.25){$1$}
\rput(0.6,2.75){$-1$}
\rput(0.75,4.25){$0$}
\rput(2.35,1.75){$1$}
\rput(2.35,3.25){$-1$}
\rput(2.3,4){$1$}
\rput(3.4,2){$-1$}
\rput(3.25,3){$1$}
\pspolygon[style=fyp,fillcolor=light](1,0)(1,1)(2,1)(2,0)
\pspolygon[style=fyp,fillcolor=light](0,1)(0,2)(1,2)(1,1)
\pspolygon[style=fyp,fillcolor=light](1,1)(1,2)(2,2)(2,1)
\pspolygon[style=fyp,fillcolor=light](1,2)(1,3)(2,3)(2,2)
\pspolygon[style=fyp,fillcolor=light](2,2)(2,3)(3,3)(3,2)
\pspolygon[style=fyp,fillcolor=light](0,3)(0,4)(1,4)(1,3)
\pspolygon[style=fyp,fillcolor=light](1,3)(1,4)(2,4)(2,3)
\end{pspicture}
\end{center}
\caption{A border labeling}\label{border}
\end{figure}
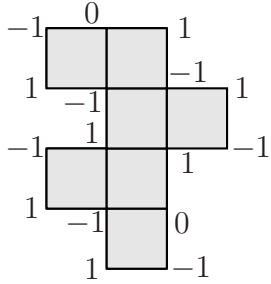

\begin{Lemma}
\label{borderlabeling}
A border labeling of a simple  polyomino is admissible.
\end{Lemma}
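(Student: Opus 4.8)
The plan is to exploit that in a border labeling only the corners of the rectilinear polygon $B(\MP)$ receive a nonzero value, so that for any maximal edge interval $I$ of $\MP$ the sum $\sum_{a\in I}\alpha(a)$ collapses to a sum over the corners of $B(\MP)$ that lie on $I$. By symmetry it suffices to treat a maximal horizontal edge interval $I=[a,b]$. Recall from Lemma~\ref{borderpolygon} that $B(\MP)$ is a single simple rectilinear polygon; hence, as one walks counterclockwise, the corners carry alternating labels $+1,-1$, and in particular any two corners joined by a single edge of $B(\MP)$ — that is, by a maximal border edge interval — carry opposite labels.

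Next I would analyze how $I$ meets the border. Each unit edge of $I$ belongs to at least one cell of $\MP$ and is either a border edge or an inner edge. Grouping the consecutive border edges into maximal runs, I claim each run is a maximal horizontal border edge interval of $\MP$: it cannot be extended within $I$, since its neighbors there are inner edges, and it cannot be extended past the ends of $I$, since by maximality of $I$ the edges just outside $I$ are not edges of any cell of $\MP$. Thus each run is a single edge of the polygon $B(\MP)$, whose two endpoints are consecutive corners and therefore bear the labels $+1$ and $-1$; each run thus contributes $0$ to the sum.

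It remains to identify the corners of $B(\MP)$ on $I$ with the endpoints of these runs, and this is where simplicity enters through Lemma~\ref{meet} and Corollary~\ref{endpoint}: every vertex of $B(\MP)$ is met by exactly two border edges, so a corner is a vertex where one horizontal and one vertical border edge meet. If $v$ is a corner on $I$, its horizontal border edge is one of the horizontal edges of $I$ incident to $v$ (at an endpoint of $I$ the outward edge is not a cell edge, so the border edge points inward along $I$), and it is the \emph{unique} incident horizontal edge that is a border edge; hence $v$ is an endpoint of exactly one run. Conversely every endpoint of a run is such a corner, and distinct runs share no endpoint, being separated by inner edges. Consequently the corners on $I$ are precisely the endpoints of the maximal border edge subintervals contained in $I$, and, writing $J$ for these subintervals,
\[
\sum_{a\in I}\alpha(a)=\sum_{J}\bigl(\alpha(\text{left end of }J)+\alpha(\text{right end of }J)\bigr)=0 .
\]
The main obstacle is exactly this last bookkeeping: one must use simplicity to exclude a boundary vertex carrying three or four border edges, which would both break the alternation of labels and destroy the clean pairing of corners into opposite-signed ends of polygon edges.
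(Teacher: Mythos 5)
Your proof is correct and takes essentially the same route as the paper's: the paper likewise decomposes a maximal horizontal edge interval $I$ into the maximal horizontal border edge intervals it contains (your ``runs''), observes that the only nonzero labels on $I$ sit at corners of $B(\MP)$, namely at the endpoints of these subintervals, and concludes that each subinterval contributes $+1-1=0$ because its endpoints are consecutive corners of the polygon. The only difference is that you spell out the bookkeeping (that the corners of $B(\MP)$ lying on $I$ are exactly the run endpoints, via the fact that every border vertex meets exactly two border edges, which rests on Lemma~\ref{meet} and Corollary~\ref{endpoint}), a step the paper asserts without detailed justification.
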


\begin{proof}
Let $\MP$ be a simple polyomino, and let $\alpha$ be a border labeling of $\MP$. Let $I$ be a maximal horizontal edge interval of $\MP$. We show that $\sum_{a\in I}\alpha(a)=0$. Let $I_1,\ldots,I_t$ be all maximal horizontal border edge intervals of $\MP$ which are contained in $I$. Note that the intervals $I_j$ are pairwise disjoint. Then $\sum_{a\in I}\alpha(a)=\sum_{a\in I_i \atop 1\leq i\leq t}\alpha(a)$, since the only elements of $I$ for which $\alpha(a)\neq 0$ are the corners of the rectilinear polygon $B(\MP)$, and since the endpoints of $I_1,\ldots,I_t$ are corners of $B(\MP)$. But, $\sum_{a\in I_i \atop 1\leq i\leq t}\alpha(a)=0$, since by definition of a border labeling, we have $\sum_{a\in I_i}\alpha(a)=0$, for each $i=1,\ldots,t$. Similarly, for a maximal vertical edge interval $I$ of $\MP$, we have $\sum_{a\in I}\alpha(a)=0$. Hence $\alpha$ is admissible.
\end{proof}

Now, we present the algebraic concepts and facts which are the main subject of this paper.

Let $\MP$ be a polyomino and $S=K[x_a:a\in V(\MP)]$ be the polynomial ring with the indeterminates $x_a$ over the field $K$. The  $2$-minor $x_{a}x_{b}-x_{c}x_{d}\in S$ is called an \textit{inner minor} of $\MP$ if $[a,b]$ is an inner interval of $\MP$ with anti-diagonal corners $c$ and $d$. Associated to $\MP$ is the binomial ideal $I_{\MP}$ in $S$, generated by all inner minors of $\MP$. This ideal is called the \textit{polyomino ideal} of $\MP$, and the $K$-algebra $K[\MP]=S/I_{\MP}$ is called the \textit{coordinate ring} of $\MP$.

In the sequel we use the following notation. Let $v\in \NN^m$ for some $m$. Then we set $\mathbf{x}^{v}=\prod_{i=1}^m{x_i}^{v_i}$ in the polynomial ring $K[x_1,\ldots,x_m]$. Note that a vector $v\in \ZZ^m$ can be written  uniquely as $v=v^+-v^-$ with $v^+,v^-\in \NN^m$ and such that the inner product of $v^+$ and $v^-$ is equal to zero.

Let $\alpha$ be an admissible labeling of a polyomino $\MP$. We may view $\alpha$ as a vector $\alpha\in \ZZ^n$, where $n$ is the number of vertices of $\MP$. By using this notation, we associate to $\alpha$ the binomial $f_{\alpha}={\mathbf{x}}^{\mathbf{\alpha}^{+}}-{\mathbf{x}}^{\mathbf{\alpha}^{-}}$ (see \cite{HQSh}). Let $J_\MP$ be the ideal in $S$ which is generated by the binomials $f_\alpha$, where $\alpha$ is an admissible labeling of $\MP$. It is known by \cite[Proposition~1.2]{HQSh} that $J_{\MP}$ is  the lattice ideal of a certain saturated lattice, and hence by \cite[Theorem~7.4]{MS}, $J_{\MP}$ is a prime ideal. By definition, it is clear that $I_\MP\subset J_\MP$. Following \cite{HQSh}, a polyomino $\MP$ is called \textit{balanced} if $f_{\alpha} \in I_{\MP}$ for every admissible labeling $\alpha$ of $\MP$.

To better understand the significan
ce of the notion balanced, we recall some concepts from \cite{EH}. Let $\MB\subset \ZZ^m$ for some $m$. Let $G_{\MB}$ be the graph with the vertex set $\mathbb{N}^m$ such that two vertices $\mathbf{a}$ and $\mathbf{c}$ are adjacent in $G_{\MB}$ if $\mathbf{a}-\mathbf{c}\in \pm \MB$. The vectors $\mathbf{a}$ and $\mathbf{c}$ are said to be \textit{connected via $\MB$} if they belong to the same connected component of $G_{\MB}$. The binomial ideal $I(\MB)$ in the polynomial ring $K[x_1,\ldots,x_m]$ is defined to be the ideal
\[
I(\MB)=(\mathbf{x}^{\mathbf{b}^+}-\mathbf{x}^{\mathbf{b}^-}: \mathbf{b}\in \MB).
\]

Now, let $\MP$ be a polyomino contained in the rectangular polyomino $\MI$ with $V(\MI)=[(1,1),(m,n)]$ for some positive integers $m$ and $n$. Let $I$ be an inner interval of $\MP$, and set $\mathbf{u}_{I}={(u_{I}^{(i,j)})}_{1\leq i\leq m \atop 1\leq j\leq n}\in \ZZ^{m\times n}$ where
\begin{equation}
u_{I}^{(i,j)}=\left \{\begin {array}{lll}
-1,&\text{if}~~~
(i,j)~\text{is a diagonal corner of}~I,\\
1,&\text{if}~~~(i,j)~\text{is an anti-diagonal corner of}~I,\\
0,&\text{otherwise}.
\end{array}\right.
\nonumber
\end{equation}
Note that if $I$ is just a cell $C$ of $\MP$, then with the notation of \cite{HQSh}, $\mathbf{u}_{I}=b_C$. It is known that the elements $b_C$ with $C\in \MI$ are linearly independent over $\ZZ$ (see \cite[Lemma~1.1]{HQSh}).

We set
\[
\MM(\MP)=\{\mathbf{u}: \mathbf{u}=\pm \mathbf{u}_{I}~\text{for some inner interval}~I~\text{of}~\MP\}.
\]

We need the following proposition to prove the main result of this paper.

\begin{Proposition}
\label{balanced}
Let $\MP$ be a polyomino. Then the following conditions are equivalent:
\begin{enumerate}
\item[{\em (a)}] $\MP$ is balanced;
\item[{\em (b)}] $I_\MP=J_\MP$;
\item[{\em (c)}] For each admissible labeling $\alpha$ of $\MP$, $\alpha^+$ and $\alpha^-$ are connected via $\MM(\MP)$;
\item[{\em (d)}] For each admissible labeling $\alpha$ of $\MP$, there exist $\mathbf{u}_1,\ldots,\mathbf{u}_t\in \MM(\MP)$ such that $\alpha^-+\mathbf{u}_1+\cdots+\mathbf{u}_i\in \mathbb{N}^n$ for all $i=1,\ldots,t$, and $\alpha^+=\alpha^-+\mathbf{u}_1+\cdots+\mathbf{u}_t$.
\end{enumerate}
\end{Proposition}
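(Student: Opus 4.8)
The plan is to organize the proof around the implications (a) $\iff$ (b), (a) $\iff$ (c), and (c) $\iff$ (d), which together yield the full equivalence. The equivalence (a) $\iff$ (b) I would dispatch immediately from the definitions: $J_\MP$ is generated by the binomials $f_\alpha$ as $\alpha$ ranges over all admissible labelings, and the inclusion $I_\MP\subseteq J_\MP$ always holds. Hence $I_\MP=J_\MP$ if and only if every generator $f_\alpha$ of $J_\MP$ already lies in $I_\MP$, which is precisely the defining condition for $\MP$ to be balanced. The real content of the proposition is therefore to connect these ideal-theoretic statements with the graph-connectivity conditions (c) and (d).

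The crucial first step is to recognize that $I_\MP=I(\MM(\MP))$ in the notation recalled from \cite{EH}. For an inner interval $I=[a,b]$ with diagonal corners $a,b$ and anti-diagonal corners $c,d$, the definition of $\mathbf{u}_I$ shows that in its canonical decomposition $\mathbf{u}_I=\mathbf{u}_I^{+}-\mathbf{u}_I^{-}$ the vector $\mathbf{u}_I^{+}$ is the sum of the unit vectors at $c$ and $d$ while $\mathbf{u}_I^{-}$ is the sum of the unit vectors at $a$ and $b$; these supports are disjoint, so this is indeed the canonical decomposition. Consequently $\mathbf{x}^{\mathbf{u}_I^{+}}-\mathbf{x}^{\mathbf{u}_I^{-}}=x_cx_d-x_ax_b$ is, up to sign, the inner minor attached to $I$. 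Since $\MM(\MP)$ consists of the vectors $\pm\mathbf{u}_I$, the generators of $I(\MM(\MP))$ run exactly over the inner minors of $\MP$ and their negatives, so that $I(\MM(\MP))=I_\MP$.

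With this identification in hand, I would derive (a) $\iff$ (c) from the standard characterization of binomial membership by walks in the graph $G_\MB$: for $\mathbf{a},\mathbf{b}\in\NN^n$ one has $\mathbf{x}^{\mathbf{a}}-\mathbf{x}^{\mathbf{b}}\in I(\MB)$ if and only if $\mathbf{a}$ and $\mathbf{b}$ are connected via $\MB$ (see \cite{EH}). Taking $\MB=\MM(\MP)$, $\mathbf{a}=\alpha^{+}$, and $\mathbf{b}=\alpha^{-}$, and recalling $f_\alpha=\mathbf{x}^{\alpha^{+}}-\mathbf{x}^{\alpha^{-}}$, this says that $f_\alpha\in I_\MP$ if and only if $\alpha^{+}$ and $\alpha^{-}$ are connected via $\MM(\MP)$. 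Quantifying over all admissible labelings $\alpha$ yields exactly (a) $\iff$ (c).

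Finally, (c) $\iff$ (d) is a matter of unwinding the definition of connectivity in $G_{\MM(\MP)}$, whose vertex set is $\NN^n$. By definition $\alpha^{+}$ and $\alpha^{-}$ are connected via $\MM(\MP)$ precisely when there is a sequence of vertices $\alpha^{-}=\mathbf{w}_0,\mathbf{w}_1,\ldots,\mathbf{w}_t=\alpha^{+}$ with each $\mathbf{w}_i\in\NN^n$ and each $\mathbf{w}_i-\mathbf{w}_{i-1}\in\pm\MM(\MP)$. Because $\MM(\MP)=-\MM(\MP)$ by construction, the steps $\mathbf{u}_i:=\mathbf{w}_i-\mathbf{w}_{i-1}$ themselves lie in $\MM(\MP)$, and the partial sums $\alpha^{-}+\mathbf{u}_1+\cdots+\mathbf{u}_i=\mathbf{w}_i$ are forced to stay in $\NN^n$ simply because a walk only visits vertices of the graph; this is exactly the data and the nonnegativity constraint appearing in (d), and the converse reads the walk back off from the data of (d). The genuine content of the whole proposition is thus concentrated in the identification $I_\MP=I(\MM(\MP))$ together with the walk-characterization of binomial membership borrowed from \cite{EH}; the only point I expect to demand care is the bookkeeping in (c) $\iff$ (d), namely that connectivity in $G_{\MM(\MP)}$ automatically enforces every intermediate partial sum to be a nonnegative integer vector.
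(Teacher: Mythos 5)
Your proposal is correct and follows essentially the same route as the paper: (a)$\iff$(b) from the definitions, the identification $I_\MP=I(\MM(\MP))$ combined with the walk-characterization of binomial membership from \cite[Theorem~6.53]{EH} for (a)$\iff$(c), and unwinding the definition of connectivity in $G_{\MM(\MP)}$ for (c)$\iff$(d). The only difference is that you spell out details the paper leaves implicit (the canonical decomposition $\mathbf{u}_I=\mathbf{u}_I^+-\mathbf{u}_I^-$ recovering the inner minors, and the bookkeeping that walk vertices lie in $\NN^n$), which is a faithful expansion rather than a different argument.
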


\begin{proof}
The conditions (a) and (b) are obviously equivalent. Also, (c) and (d) are equivalent by definition of $G_{\MM(\MP)}$ (see also the proof of \cite[Theorem~6.53]{EH}). We show that (a) and (c) are equivalent. Let $\alpha$ be an admissible labeling of $\MP$. By \cite[Theorem~6.53]{EH},
$f_{\alpha}=\mathbf{x}^{\alpha^+}-\mathbf{x}^{\alpha^-}\in I(\MM(\MP))$ if and only if $\alpha^+$ and $\alpha^-$ are connected via $\MM(\MP)$. But note that $I_{\MP}=I(\MM(\MP))$. So, $f_{\alpha}\in I_{\MP}$ if and  only if $\alpha^+$ and $\alpha^-$ are connected via $\MM(\MP)$. Hence, we have $\MP$ is balanced  if and  only if $\alpha^+$ and $\alpha^-$ are connected via $\MM(\MP)$ for every admissible labeling $\alpha$ of $\MP$.
\end{proof}

\section{Simple polyominoes}
\label{main section}

The following theorem which was conjectured in \cite{HQSh} is the main theorem of this paper.

\begin{Theorem}
\label{main}
A polyomino is simple if and only if it is balanced.
\end{Theorem}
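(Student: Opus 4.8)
The plan is to prove both implications through the characterization in Proposition~\ref{balanced}, working throughout with condition (d): $\MP$ is balanced if and only if for every admissible labeling $\alpha$ there are $\mathbf{u}_1,\dots,\mathbf{u}_t\in\MM(\MP)$ with $\alpha^-+\mathbf{u}_1+\cdots+\mathbf{u}_i\in\NN^n$ for all $i$ and $\alpha^+=\alpha^-+\mathbf{u}_1+\cdots+\mathbf{u}_t$. In this form ``balanced'' becomes the purely combinatorial statement that one can travel from $\alpha^-$ to $\alpha^+$ inside $\NN^n$ using inner-interval moves. I would prove necessity (balanced $\Rightarrow$ simple) by contraposition and sufficiency (simple $\Rightarrow$ balanced) by induction on the number of cells.

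For necessity, suppose $\MP$ is not simple. Then it has a hole $\MP'$, which by Lemma~\ref{hole} is itself a simple polyomino, so by Lemma~\ref{borderpolygon} its border $B(\MP')$ is a simple rectilinear polygon. I would take a border labeling $\beta$ of $\MP'$ and extend it by $0$ to all of $V(\MP)$ (the vertices of the hole are vertices of $\MP$). Since every border edge of $\MP'$ is also an edge of a cell of $\MP$, each maximal edge interval $I$ of $\MP$ contains, among the corners of $B(\MP')$, only complete maximal edge intervals' worth of them, whence $\sum_{a\in I}\beta(a)=0$ by Lemma~\ref{borderlabeling}; thus $\beta$ is an admissible labeling of $\MP$. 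It then remains to show $f_\beta\notin I_\MP$, i.e. that $\beta^+$ and $\beta^-$ are \emph{not} connected via $\MM(\MP)$: because no cell of $\MP$ lies in the hole, every inner interval of $\MP$ avoids the interior of $B(\MP')$, and one checks that no sequence of such moves can transform $\beta^-$ into $\beta^+$ while staying in $\NN^n$. Hence $\MP$ is not balanced.

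For sufficiency, let $\MP$ be simple and $\alpha$ an admissible labeling. By Lemma~\ref{borderpolygon}, $B(\MP)$ is a simple rectilinear polygon, and I would first establish the geometric fact announced in the introduction that every simple rectilinear polygon has at least four good corners. Fix a good corner $c$; by definition the rectangle $R$ spanned by $c$ and its two neighbouring corners lies in the interior of $B(\MP)$, so $R$ is an inner interval of $\MP$ and all its subintervals supply moves $\mathbf{u}_I\in\MM(\MP)$. The inductive step is to peel the strip of cells at $c$ lying against one edge of $R$: this produces a polyomino $\MP'$ with strictly fewer cells, which is again simple because $c$ was good (the removed strip sits on the boundary and its removal creates no hole). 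Using the moves $\mathbf{u}_I$ with $I\subseteq R$ I would rewrite $\alpha$, through a sequence of $\MM(\MP)$-moves keeping all partial sums in $\NN^n$, into an admissible labeling supported on $V(\MP')$, apply the induction hypothesis to $\MP'$, and concatenate the two move sequences. The base case is a single cell (or, more generally, a rectangular polyomino), which is plainly balanced.

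The main obstacle is precisely this reduction step. The good-corner hypothesis is what guarantees simultaneously that $R$ is an inner interval (so the needed moves exist) and that peeling keeps $\MP'$ connected and hole-free; the delicate part is to produce the \emph{explicit} sequence of inner-interval moves that absorbs the values of $\alpha$ on the peeled strip and, crucially, never drives a partial sum negative, which is exactly the nonnegativity requirement of condition (d). Organising this bookkeeping --- and verifying in the necessity direction that the hole genuinely obstructs connectivity inside $\NN^n$ --- is where the real work lies; the remaining verifications (homogeneity of $f_\alpha$, admissibility of the transferred labeling) are routine.
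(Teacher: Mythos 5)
Your plan for the hard direction (simple $\Rightarrow$ balanced) contains a step that is actually false, not merely unproved. You peel ``the strip of cells at $c$ lying against one edge of $R$'' and claim the result ``is again simple because $c$ was good (the removed strip sits on the boundary and its removal creates no hole).'' Goodness of $c$ controls only what lies \emph{inside} the rectangle; it says nothing about how the rest of $\MP$ is attached to it, and the peel can disconnect $\MP$. Concretely, let $\MP$ consist of the cells with lower-left corners $(0,2),(1,2),(2,2),(3,2)$ (a horizontal bar), $(0,1),(1,1)$ (hanging under its left half), and $(-1,2),(-1,3)$ (attached at its left end). This is a simple polyomino, $c=(4,3)$ is a good corner of $B(\MP)$ (its neighbor corners are $(4,2)$ and $(0,3)$, and the spanned rectangle $[(0,2),(4,3)]$ is exactly the bar), yet peeling the strip against the edge $[(0,3),(4,3)]$ removes the whole bar and leaves two components that meet only in the point $(0,2)$ --- not a polyomino at all, so the induction cannot continue. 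Beyond this, the two steps you defer as ``bookkeeping'' are the entire content: (i) a labeling of $\MP$ vanishing outside $V(\MP')$ need \emph{not} be admissible for $\MP'$, since a maximal interval of $\MP$ may split into several maximal intervals of $\MP'$ and only the total sum is zero; (ii) your good corner of $B(\MP)$ has no relation whatsoever to $\alpha$ --- $\alpha$ may vanish identically on the strip, and then no moves supported in the rectangle make any progress on $\alpha$. The paper avoids all of this by a different induction: it inducts on $\deg f_\alpha$, applies the four-good-corners lemma not to $B(\MP)$ but to the rectilinear polygon traced out by an alternating-sign walk through the support of $\alpha$ (so the chosen good corner $a_i$ automatically carries a nonzero label, with neighbors of opposite sign), and replaces the nonnegativity bookkeeping by an algebraic reduction: subtract $(\mathbf{x}^{\alpha^+}/x_{a_{i-1}}x_{a_{i+1}})f_{\alpha_I}$ from $f_\alpha$, factor out $x_{a_i}$, and use that $J_\MP$ is prime to divide by $x_{a_i}$, landing on binomials of strictly smaller degree. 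Your plan has no substitute for this mechanism.

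In the converse direction your construction (hole $\MP'$, border labeling extended by zero, admissibility checked interval by interval) coincides with the paper's, but the conclusion --- ``one checks that no sequence of such moves can transform $\beta^-$ into $\beta^+$'' --- is exactly what has to be proved, and the reason you suggest (inner intervals of $\MP$ avoid the interior of $B(\MP')$) is not sufficient: disjointness of supports does not by itself prevent a signed sum of the vectors $\mathbf{u}_I$, $I$ an inner interval of $\MP$, from equaling $\beta^+-\beta^-$. The paper closes this by linear algebra: since $\MP'$ is simple, the already-proved implication applied to $\MP'$ gives $\beta^+-\beta^-=\mathbf{u}'_1+\cdots+\mathbf{u}'_l$ with $\mathbf{u}'_j\in\MM(\MP')$, hence an integer combination of the $b_{C'}$ with $C'\in\MP'$; if $\MP$ were balanced one would also have $\beta^+-\beta^-=\mathbf{u}_1+\cdots+\mathbf{u}_t$, an integer combination of the $b_{C}$ with $C\in\MP$; since the cells of $\MP$ and $\MP'$ are disjoint and all the $b_C$ are linearly independent by \cite[Lemma~1.1]{HQSh}, both combinations must vanish, contradicting $\beta^+\neq\beta^-$. (Note the paper's argument deliberately uses the first implication applied to the hole, and that nonnegativity of partial sums plays no role here --- the obstruction is to the total sum.) Your sketch supplies neither this argument nor any alternative obstruction, so both directions of your proposal have genuine gaps, and the sufficiency direction as written would fail.
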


\begin{proof}
Let $\MP$ be a  polyomino. First suppose $\MP$ is simple. We have to show that for any admissible labeling $\alpha$ of $\MP$ we have that $f_\alpha\in I_{\MP}$, and we show this by induction on $\deg f_{\alpha}$. Suppose $\deg f_\alpha=2$. Then $\alpha=\pm \alpha_{I}$ for some inner interval $I$, because $\MP$ is simple. Thus by definition $f_\alpha\in  I_{\MP}$.

Now suppose that $\deg f_\alpha>2$. We choose $a_0\in V(\MP)$ with $\alpha(a_0)>0$. Since $\alpha$ is admissible there exists a horizontal edge interval $[a_0,a_1]$ of $\MP$ with $\alpha(a_1)<0$. By using again that $\alpha$ is admissible, there exists a vertical edge interval $[a_1,a_2]$ of $\MP$ with $\alpha(a_2)>0$. Proceeding in this way we obtain a sequence of edge intervals of $\MP$,
\[
[a_0,a_1],[a_1,a_2], [a_2,a_3],\ldots
\]
which are alternatively  horizontal and vertical and such that $\sign (\alpha(a_i))=(-1)^{i}$ for all $i$.

Since $V(\MP)$ is a finite set, there exists a smallest integer $r$ such that $[a_r,a_{r+1}]$ intersects $[a_j,a_{j+1}]$ for some $j<r-1$. We may assume that $j=0$. If $[a_r,a_{r+1}]$ is a vertical interval, then  $[a_r,a_{r+1}]$  and $[a_0,a_1]$ intersect in precisely one vertex, which we call $a$. If $[a_r,a_{r+1}]$ is horizontal, then we let $a=a_1$. In this way we obtain a simple rectilinear polygon $R$ whose edges are edge intervals of $\MP$ with corner sequence $a,a_1,a_2,\ldots,a_{r-1},a$ if $[a_r,a_{r+1}]$ is vertical and corner sequence  $a,a_2,a_3,\ldots,a_{r-1},a$ if $[a_r,a_{r+1}]$ is horizontal. Moreover, we have  $\sign (\alpha(a_i))=(-1)^{i}$ for all $i$. The  cells in the interior of $R$ all belong to $\MP$ because $\MP$ is simple. We may assume that the orientation of  $R$ given by the order of the corner sequence is counterclockwise. Then with respect to this orientation the interior of $R$ meets $R$ on the left hand side,  see Figure~\ref{rectilinear}.

We call a convex corner $c$ of $R$ {\em good} if the rectangle which is spanned by $c$ and its neighbor corners is in the interior of $R$. We claim that $R$ has at least four good  corners. We will prove the claim later and first discuss its consequences. Since $R$ has at least four  good corners there is at least one good corner $c$ such that $c$ and its neighbor corners are all different from $a$. Let $I$ be the rectangle in the interior of $R$ spanned by $c$ and its neighbor corners. Without loss of generality we may assume that this corner looks like the one displayed  in  Figure~\ref{good} with $c=a_i$.

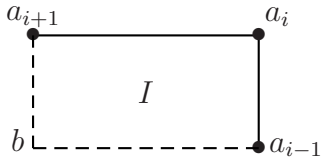
\begin{figure}[hbt]
\begin{center}
\psset{unit=0.5cm}
\begin{pspicture}(2.5,-1)(4.5,5)
\psline(6,1)(6,4)
\psline(6,4)(0,4)
\rput(7,1){$a_{i-1}$}
\rput(6.5,4.5){$a_{i}$}
\rput(0,4.5){$a_{i+1}$}
\rput(-0.4,1.2){$b$}
\rput(6,1){$\bullet$}
\rput(6,4){$\bullet$}
\rput(0,4){$\bullet$}
\rput(3,2.5){$I$}
\psline[linestyle=dashed](0,1)(0,4)
\psline[linestyle=dashed](0,1)(6,1)
\end{pspicture}
\end{center}
\caption{A good corner and its rectangle}\label{good}
\end{figure}

Since all cells in the interior of  $I$ belong to the interior of  $R$ and since all those cells belong to $\MP$, it follows that $f_{\alpha_{I}}\in I_{\MP}$. Without loss of generality, we may assume that $\alpha(a_i)<0$, and hence $\alpha(a_{i-1}), \alpha(a_{i+1})>0$. Then the homogeneous binomial $g=f_{\alpha}-(\mathbf{x}^{\alpha^+}/x_{a_{i-1}}x_{a_{i+1}})f_{\alpha_{I}}$ has the same degree as $f_\alpha$ and belongs to $J_\MP$, since  $f_{\alpha}$ and $f_{\alpha_{I}}$ belong to $J_{\MP}$. Furthermore, $g=x_{a_{i}}h$, where $h=x_b(\mathbf{x}^{\alpha^+}/x_{a_{i-1}}x_{a_{i+1}})-\mathbf{x}^{\alpha^-}/x_{a_{i}}$. It follows that  $h\in J_{\MP}$, since $x_{a_i}\notin J_{\MP}$ and since $J_{\MP}$ is  a prime ideal. Since $J_\MP$ is generated by the binomials $f_\beta$ with $\beta$ an admissible labeling of $\MP$,  there exist $f_{\beta_l}\in J_{\MP}$ such that $h=\sum_{l=1}^{s}r_lf_{\beta_l}$, where $\deg f_{\beta_l}\leq \deg h$ and $r_l\in S$ for all $l$. Since $\deg h< \deg f_{\alpha}$ we also have  $\deg f_{\beta_l}< \deg f_{\alpha}$ for all $l$. Thus our induction hypothesis  implies that  $f_{\beta_l}\in I_{\MP}$ for all $l$. It follows that $h\in I_{\MP}$, and hence $f_{\alpha}\in I_{\MP}$, since $f_{\alpha_{I}}\in I_{\MP}$.

In order to complete the proof that $\MP$ is balanced it remains to prove that indeed any rectilinear polygon $R$ has at least four good convex corners. We prove this by defining an injective  map $\gamma$ which assigns to each convex corner  of $R$ which is not good a concave corner of $R$. Since, as is well known and easily seen,  for any simple rectilinear polygon the number of convex corners is four more than the number of concave corners, it  will follow that there are at least four good corners.

The map $\gamma$ is defined as follows: let $c$ be a convex corner of $R$ which is not good. Then the polygon $R$ crosses the open interior of the rectangle which is spanned by $c$ and the neighbor corners of $c$. The gray area in Figure~\ref{cross} belongs to the interior of $R$.

\begin{figure}[hbt]
\begin{center}
\psset{unit=0.5cm}
\begin{pspicture}(2.65,-1)(4.5,5)
\psline(6,1)(6,4)
\psline(6,4)(0,4)
\rput(6,1){$\bullet$}
\rput(6,4){$\bullet$}
\rput(6.4, 4.4){$c$}
\rput(0,4){$\bullet$}
\psline[linestyle=dashed](0,1)(0,4)
\psline[linestyle=dashed](0,1)(6,1)
\psline(-1,3)(3,3)
\psline(3,3)(3,2)
\psline(3,2)(5,2)
\psline(5,2)(5,0)
\pspolygon[style=fyp,fillcolor=light](0,4)(6,4)(6,1)(5,1)(5,2)(3,2)(3,3)(0,3)
\end{pspicture}
\end{center}
\caption{$R$ intersects the rectangle}\label{cross}

\end{figure}
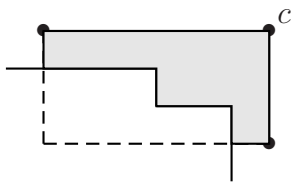

Now we let $L$ be the angle bisector of the $90$ degrees angle centered in $c$. Next we consider the set $\ML_c$ of all lines perpendicular to $L$. The unique  line in $\ML_c$ which intersects $L$ in the point $p$ and such that the distance from $c$ to $p$ is $t$, will be denoted by $L_t$. There is a smallest number $t_0$ such that $L_{t_0}$ has a non-trivial intersection with $R$ in the open interior of the rectangle. This intersection with $L_{t_0}$ consists of at least one and at most finitely many concave corners of $R$, see Figure~\ref{half}.

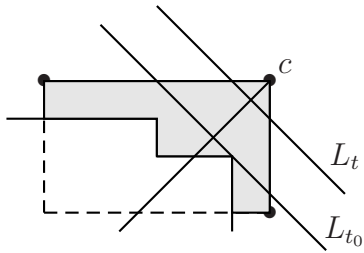
\begin{figure}[hbt]
\begin{center}
\psset{unit=0.5cm}
\begin{pspicture}(2.5,-1)(4.5,5)
\psline(6,0.5)(6,4)
\psline(6,4)(0,4)
\rput(6,0.5){$\bullet$}
\rput(6,4){$\bullet$}
\rput(6.4, 4.4){$c$}
\rput(0,4){$\bullet$}
\psline[linestyle=dashed](0,0.5)(0,4)
\psline[linestyle=dashed](0,0.5)(6,0.5)
\psline(-1,3)(3,3)
\psline(3,3)(3,2)
\psline(3,2)(5,2)
\psline(5,2)(5,0)
\pspolygon[style=fyp,fillcolor=light](0,4)(6,4)(6,0.5)(5,0.5)(5,2)(3,2)(3,3)(0,3)
\psline(6,4)(2,0)
\psline(3,6)(8,1)
\rput(8,2){$L_t$}
\psline(1.5,5.5)(7.5,-0.5)
\rput(8,0){$L_{t_0}$}
\end{pspicture}
\end{center}
\caption{$L_{t_0}$ defines $\gamma(c)$}\label{half}
\end{figure}

We define $\gamma$ to assign to $c$ one of these concave corners. The map $\gamma$ is injective. Indeed, if $d$ is another convex corner of $R$ with $\gamma(d)=\gamma(c)$, then the line in $\ML_d$ which hits $\gamma(c)$ must be identical with $L_{t_0}$, and this implies that $d$ lies in the intersection of the rectangle with the linear half space defined by $L_{t_0}$ containing $c$. But in this area there is no other corner of $R$ which is not good. Hence $d=c$.

\medskip
Conversely, suppose now that $\MP$ is balanced and assume that $\MP$ is not simple. Let $\MP'$ be a hole of $\MP$. Then by Lemma~\ref{hole}, $\MP'$ is a simple polyomino. Let $\alpha$ be a border labeling of $\MP'$. We consider the labeling $\beta$ of $\MP$ which for each $a\in V(\MP)$ is defined as follows:
\begin{equation}
\beta(a)=\left \{\begin {array}{lll}
\alpha(a)&\text{if}~~~
a\in V(\MP'),\\
0&\text{if}~~~
a\notin V(\MP').
\end{array}\right.
\nonumber
\end{equation}
Then $\beta$ is an admissible labeling of $\MP$, by a similar argument as in the proof of Lemma~\ref{borderlabeling}. Indeed, let $I$ be a maximal horizontal (vertical) edge interval of $\MP$ and let $\MS$ be the set of all horizontal (vertical) border edge intervals of $\MP'$ such that $I_j\cap I\neq \emptyset$. If $\MS=\emptyset$, then $\beta(a)=0$ for all $a\in I$. If $\MS\neq \emptyset$ and $I_j\in \MS$, then $I_j\subset I$. Since the intervals $I_j$ are disjoint, we have $\sum_{a\in I}\beta(a)=\sum_{a\in I_j \atop I_j\in \MS}\beta(a)=\sum_{a\in I_j \atop I_j\in \MS}\alpha(a)$. Hence $\sum_{a\in I}\beta(a)=0$, because by definition of $\alpha$, we have $\sum_{a\in I_j}\alpha(a)=0$ for all $I_j\in \MS$.

Note that we may consider $\alpha$ and $\beta$ as vectors in $\ZZ^{m\times n}$ where $m$ and $n$ are positive integers with $V(\MP)\subset [(1,1),(m,n)]$. Since $\MP$ is a balanced polyomino, it follows that there exist $\mathbf{u}_1,\ldots,\mathbf{u}_t\in \MM(\MP)$ such that $\beta^+=\beta^-+\mathbf{u}_1+\cdots+\mathbf{u}_t$, by Proposition~\ref{balanced}. On the other hand, since $\MP'$ is a simple polyomino, it follows from the first part of the proof that $\MP'$ is also balanced. Thus by Proposition~\ref{balanced} there exist $\mathbf{u}'_1,\ldots,\mathbf{u}'_l\in \MM(\MP')$ such that $\alpha^+=\alpha^-+\mathbf{u}'_1+\cdots+\mathbf{u}'_l$, since $\alpha$ is admissible by Lemma~\ref{borderlabeling}. Note that by the construction of the labeling $\beta$, it is clear that $\beta^+=\alpha^+$ and $\beta^-=\alpha^-$ as vectors in $\ZZ^{m\times n}$. So we have $\mathbf{u}_1+\cdots+\mathbf{u}_t=\mathbf{u}'_1+\cdots+\mathbf{u}'_l$. For each $i=1,\ldots,t$, we have $\mathbf{u}_i=\pm \mathbf{u}_{{I}_i}$, and for each $j=1,\ldots,l$, we have $\mathbf{u}'_j=\pm \mathbf{u}_{{I}'_j}$, where ${I}_i$ and ${I}'_j$ are inner intervals of $\MP$ and $\MP'$, respectively. So, it follows that for each $i,j$, $\mathbf{u}_i$ and $\mathbf{u}'_j$ are linear combination of the $b_{C}$'s and $b_{C'}$'s, respectively, where $C$ stands for cells of $\MP$ and $C'$ stands for cells of $\MP'$. But the $b_C$'s and $b_{C'}$'s are linearly independent, so that $\mathbf{u}_1+\cdots+\mathbf{u}_t=\mathbf{u}'_1+\cdots+\mathbf{u}'_l=0$, which is a contradiction, since obviously we have $\beta^+\neq \beta^-$. Therefore, $\MP$ is a simple polyomino.
\end{proof}

By the above theorem together with \cite[Corollary~2.3]{HQSh}, we get the following.

\begin{Corollary}
\label{Ayesha's conj}
Let $\MP$ be a simple polyomino. Then $K[\MP]$ is a Cohen--Macaulay normal domain.
\end{Corollary}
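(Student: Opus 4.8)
The plan is to obtain this corollary as a formal consequence of Theorem~\ref{main} together with the results on balanced polyominoes from \cite{HQSh}; since all of the genuine combinatorial and geometric work has already been carried out in the proof of the main theorem, the corollary should require only a short deduction rather than any new argument.

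First I would invoke Theorem~\ref{main} in the direction \emph{simple implies balanced}: because $\MP$ is assumed to be a simple polyomino, it is balanced. By Proposition~\ref{balanced} this means that $I_\MP=J_\MP$, and in particular $K[\MP]=S/I_\MP=S/J_\MP$. This identification is the key step, as it replaces the polyomino ideal by the a priori better-behaved ideal $J_\MP$.

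Next I would appeal to the structure of $J_\MP$. By \cite[Proposition~1.2]{HQSh} the ideal $J_\MP$ is the lattice ideal of a saturated lattice, and by \cite[Theorem~7.4]{MS} the lattice ideal of a saturated lattice is prime; this already shows that $K[\MP]$ is a domain. The sharper statement that $K[\MP]$ is moreover normal and Cohen--Macaulay is precisely the content of \cite[Corollary~2.3]{HQSh}, which is established there for every balanced polyomino. Applying that result to the now-balanced polyomino $\MP$ then yields the assertion in full.

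I expect no real obstacle at this stage: the entire difficulty of the paper has been concentrated in Theorem~\ref{main}, and the only thing that must be checked in order to apply \cite[Corollary~2.3]{HQSh} is that $\MP$ is balanced, which is exactly what the main theorem supplies. Accordingly, I would present the proof as a single short paragraph, citing Theorem~\ref{main} to pass from ``simple'' to ``balanced'' and then \cite[Corollary~2.3]{HQSh} to conclude that $K[\MP]$ is a normal Cohen--Macaulay domain.
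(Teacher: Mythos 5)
Your proposal is correct and follows exactly the paper's route: the paper deduces the corollary in one line by combining Theorem~\ref{main} (simple implies balanced) with \cite[Corollary~2.3]{HQSh}, which is precisely your argument. The extra remarks you include about $J_\MP$ being a prime lattice ideal are accurate but not needed, since \cite[Corollary~2.3]{HQSh} already delivers the full conclusion for balanced polyominoes.
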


\end{document}